\newcommand{\sA}{\mathcal{A}}
\newcommand{\sC}{\mathcal{C}}
\newcommand{\sD}{\mathcal{D}}
\newcommand{\sE}{\mathcal{E}}
\newcommand{\sO}{\mathcal{O}}
\newcommand{\sP}{\mathcal{P}}
\newcommand{\sQ}{\mathcal{Q}}
\newcommand{\sT}{\mathcal{T}}
\newcommand{\F}{\mathbf{F}}
\newcommand{\G}{\mathbf{G}}
\renewcommand{\H}{\mathbf{H}}
\newcommand{\N}{\mathbf{N}}
\newcommand{\T}{\mathbf{T}}
\newcommand{\V}{\mathbf{V}}
\newcommand{\Y}{\mathbf{Y}}
\newcommand{\Z}{\mathbf{Z}}
\newcommand{\fP}{\mathfrak{P}}
\newcommand{\bDelta}{\operatorname{\mathbf{\Delta}}}
\newcommand{\Cat}{\operatorname{\mathbf{Cat}}}
\newcommand{\Set}{\operatorname{\mathbf{Set}}}
\newcommand{\sSet}{\operatorname{\mathbf{sSet}}}
\newcommand{\Ab}{\operatorname{\mathbf{Ab}}}
\newcommand{\Tot}{\operatorname{Tot}}
\newcommand{\op}{{\operatorname{op}}}
\newcommand{\tf}{{\operatorname{tf}}}
\newcommand{\coh}{{\operatorname{coh}}}
\newcommand{\Ker}{\operatorname{Ker}}
\newcommand{\Simpl}{\operatorname{Simpl}}
\newcommand{\Coker}{\operatorname{Coker}}
\newcommand{\Spec}{\operatorname{Spec}}
\newcommand{\Aut}{\operatorname{Aut}}
\newcommand{\rg}{\operatorname{rk}}
\newcommand{\Sk}{\operatorname{Sk}}
\newcommand{\colim}{\varinjlim}
\renewcommand{\phi}{\varphi}
\renewcommand{\epsilon}{\varepsilon}
\newcommand{\by}[1]{\overset{#1}{\longrightarrow}}
\newcommand{\yb}[1]{\overset{#1}{\longleftarrow}}
\newcommand{\iso}{\by{\sim}}
\newcommand{\osi}{\yb{\sim}}
\newtheorem{prop}{Proposition}[subsection]
\newtheorem{lemma}[prop]{Lemma}
\newtheorem{thm}[prop]{Theorem}
\newtheorem{cor}[prop]{Corollary}
\theoremstyle{definition}
\newtheorem{defn}[prop]{Definition}
\newtheorem{para}[prop]{}
\theoremstyle{remark}
\newtheorem{rk}[prop]{Remark}
\newtheorem{ex}[prop]{Example}
\numberwithin{equation}{section}
\newcounter{spec}
\newenvironment{thlist}{\begin{list}{\rm{(\roman{spec})}}%
{\usecounter{spec}\labelwidth=20pt\itemindent=0pt\labelsep=10pt}}%
{\end{list}}%
\begin{document}

\title{Around Quillen's Theorem A}
\author{Bruno Kahn}
\address{Institut de Math\'ematiques de Jussieu\\Case 247\\
4 place Jussieu\\
75252 Paris Cedex 05\\
FRANCE}
\email{kahn@math.jussieu.fr}
\date{June 2014}
\begin{abstract}
We introduce a notion of cellular functor. It allows us to give a variant of Quillen's proof of the Solomon-Tits theorem, which does not use Theorem A. We also use it to generalise some exact sequences of Quillen, and to reformulate them into a rank spectral sequence converging to the
homology of the $K'$-theory space of an integral scheme.
\end{abstract}
\subjclass[2010]{19D50, 55U99}
\maketitle

\hfill In the memory of Daniel Quillen

\tableofcontents

\section*{Introduction} Let $A$ be a Dedekind domain. Inspection shows immediately that the
exact sequences of \cite[Th. 3]{quillen2}, used by Quillen to prove that the $K$-groups of $A$
are finitely generated when $A$ is a ring of $S$-integers in a global field, assemble to define
an exact couple, hence a spectral sequence converging to the homology of $KA$. This gives
potentially more power to Quillen's method, which as such yields no information on the ranks of
these $K$-groups.

A natural way to imagine such a spectral sequence is to consider the maps
\[BQ_{n-1}\sP(A)\to BQ_n\sP(A)\]
used by Quillen as \emph{homotopy cofibrations} rather than homotopy fibrations. The
resulting rank spectral sequence coincides with the above-mentioned spectral sequence: this is not immediately obvious, but follows
from a  simple argument of Vogel, see Remark \ref{r1}.

The aim of this note is to construct the rank spectral sequence in a way as functorial as possible. The two operative ingredients are Thomason's theorem on the nerve of a Grothendieck construction (Theorem \ref{t1}) and the notion of \emph{cellular functor} (Definition \ref{cell}), which is well-adapted to the present context thanks to Theorem \ref{p1}. After the first version of this paper was written, Fei Sun observed that Theorem \ref{p1} may also be used to recover part of Quillen's proof of the Solomon-Tits theorem in \cite{quillen2}. We complete Sun's remark in \S \ref{SolTits}, by covering the missing part: unlike in Quillen's argument, Theorem A is not used there. In this light, one might think of Theorem \ref{p1} as ``dual'' to Theorem A, potentially playing a similar r\^ole for homotopy cofibrations as Theorem A plays for homotopy fibrations.

The main theorem is Theorem \ref{t4}: we apply the theory in the slightly more general case of torsion-free sheaves over an integral scheme, which might be useful elsewhere.

The next step is to compute the $d^1$ differentials of this rank spectral sequence. This has been done by Fei Sun in his thesis, using the \emph{universal modular symbols} of Ash-Rudolph \cite{AR}: see \cite{sam} and  \cite{fei}.

\subsection*{Acknowledgements} I thank Pierre Vogel for explaining me the argument of Remark \ref{r1} in 2008, and Georges Maltsiniotis for pointing out Thomason's paper \cite{thomason}. I also thank Fei Sun for several discussions around these questions and Jo\" el Riou for helpful comments, especially for pointing out a wrong axiomatisation of Proposition \ref{pcontr}  in a previous version of this manuscript.

I don't think Theorem A is used explicitly anywhere. Yet I feel its spirit is prevalent in this text, hence the title.

\subsection*{Notation} We denote by $\Set$, $\Ab$, $\sSet$, $\Cat$ the category of (small) sets, abelian groups,
simplicial sets, categories. For $n\ge 0$,
$[n]$ denotes the totally ordered set $\{0,\dots, n\}$, considered as a small category. We
write $*$ for the category with one object and one morphism (sometimes for the set with $1$
element). Finally, $\bDelta$ denotes the category of simplices (objects: finite nonempty
ordinals, morphisms: non-decreasing maps).

We shall use Mac Lane's comma notation \cite[p. 47]{mcl}: if we have a diagram of functors
\[\sC\by{F} \sD\yb{F'} \sC'  \]
the comma category $F\downarrow F'$ has for objects the diagram $F(c)\to F'(c')$, and for
morphisms the obvious commutative diagrams. We use the following abbreviations: if $c\in \sC$,
yielding the functor $F_c:*\to \sC$, we write $F_c\downarrow F' =c\downarrow F'$; similarly on
the right.

\section{Nerves with coefficients}

Subsections \ref{s1.1}--\ref{s1.3} can essentially be found in Goerss-Jardine \cite[Ch. IV]{gj}.

\subsection{Set-valued coefficients}\label{s1.1}
\begin{para}\label{1} Let $\sD\in \Cat$ be a small category. The \emph{nerve} of $\sD$ is the
simplicial set $N(\sD)$ with
\[N_n(\sD)= Ob \Cat([n],\sD)= \coprod_{d_0\to \dots \to d_n} *, \; d_i\in \sD\]
cf. \cite[II, 4.1]{GZ}.
\end{para}

\begin{para}\label{2} Let $\sD\in \Cat$ and let $F:\sD\to \Set$ be a covariant functor. The
\emph{nerve of $\sD$ with coefficients in $F$} is the simplicial set $N(\sD,F)$ with
\[N_n(\sD,F) = \coprod_{d_0\to \dots \to d_n} F(d_0), \; d_i\in \sD\]
cf. \cite[App. II, 3.2]{GZ}. For $F$ the constant functor with value $*$, we recover the nerve
of $\sD$.
\end{para}

\begin{para}\label{3} Let $(\sD,F)$ be as in \ref{2}. We have the associated category
\[[\sD,F]=\{(d,x)\mid d\in \sD, x\in F(d)\}\]
where a morphism $(d,x)\to (d',x')$ is a morphism $f\in \sD(d,d')$ such that $F(f)(x)=x'$.
This category has two other equivalent descriptions:
\begin{enumerate}
\item \cite[II, 1.1]{GZ} Let $y=\sD^{op}\to \Cat(\sD,\Set)$ be the ``coYoneda" embedding: then
$[\sD,F]\simeq y\downarrow F$, where $F$ is considered as an object of $\Cat(\sD,\Set)$.
\item $[\sD,F]\simeq *\downarrow F$, where $*\in \Cat(\sD,\Set)$ is the constant functor with
value $*$ and $F$ is considered as a functor.
\end{enumerate}
\end{para}

The following lemma is obvious:

\begin{lemma}\label{l1} There is a canonical isomorphism: $N(\sD,F)\simeq N([\sD,F])$.
\end{lemma}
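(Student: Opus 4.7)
The plan is to exhibit a degreewise bijection between $N(\sD,F)$ and $N([\sD,F])$ and then observe that it is natural in $[n]$, so automatically simplicial. Unpacking definitions, an $n$-simplex of $N([\sD,F])$ is a chain
\[(d_0,x_0)\by{f_0}(d_1,x_1)\by{f_1}\cdots\by{f_{n-1}}(d_n,x_n)\]
in $[\sD,F]$. By the very definition of $[\sD,F]$, a morphism $(d,x)\to(d',x')$ is a morphism $f\in\sD(d,d')$ subject to the constraint $F(f)(x)=x'$, so each $x_{i+1}$ is forced to equal $F(f_i)(x_i)$. Consequently, such an $n$-simplex is determined by, and recovered from, the pair consisting of a chain $d_0\by{f_0}\cdots\by{f_{n-1}}d_n$ in $\sD$ and an element $x_0\in F(d_0)$. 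This gives a bijection
\[N_n([\sD,F])\simeq\coprod_{d_0\to\cdots\to d_n} F(d_0)=N_n(\sD,F).\]

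The second step is to check that this bijection commutes with the face and degeneracy operators. The degeneracies $s_i$ insert an identity morphism, hence add a repeated entry $(d_i,x_i)$ and do nothing to $x_0$; this manifestly agrees on both sides. The inner and terminal face maps $d_i$ with $i\ge 1$ either compose $f_{i-1}$ with $f_i$ or delete the last arrow and object; in either case the pair $(d_0,x_0)$ is untouched, so the two descriptions match. The only non-trivial case is $d_0$: on the right it drops $(d_0,x_0)$, leaving a chain based at $(d_1,x_1)$, while on the left it drops $d_0$ and replaces the basepoint $x_0\in F(d_0)$ by its image $F(f_0)(x_0)\in F(d_1)$. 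These coincide precisely because $x_1=F(f_0)(x_0)$ by the defining condition on morphisms in $[\sD,F]$.

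The main (and only) obstacle is thus this $d_0$ compatibility, and it evaporates once one notes that the morphisms in $[\sD,F]$ are nothing but transport data along $F$; this is exactly why the lemma is stated as obvious. No choice is involved in either direction, so the resulting simplicial isomorphism is moreover canonical.
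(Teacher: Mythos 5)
Your argument is correct and is exactly the explicit version of what the paper leaves implicit: the paper declares the lemma obvious and gives no proof, the point being precisely that the morphism condition $F(f)(x)=x'$ in $[\sD,F]$ forces a chain there to be determined by the underlying chain in $\sD$ together with $x_0\in F(d_0)$, with the only compatibility to check being the face $d_0$, which you handle correctly via the transport condition. Nothing further is needed.
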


\subsection{Abelian group-valued coefficients}\label{s1.2}

\begin{para} Suppose that $F$ takes its values in the category $\Ab$ of abelian groups. Then
$N(\sD,F)$ is a simplicial abelian group; it has \emph{homology groups} \cite[App. II, 3.2,
p. 153]{GZ}
\[H_i(\sD,F)=H_i([N(\sD,F)]) =\pi_i(N(\sD,F);0)\]
where $[X]$ is the chain complex associated to a simplicial abelian group $X$ by taking for
differentials the alternating sums of the faces.
\end{para}
\enlargethispage*{20pt}

\begin{para}\label{ez} Let us recall the Eilenberg-Zilber--Cartier theorem as expounded in
\cite[p. 7]{illusie} (see also \cite[2.9 and 2.16]{dp}). To a bisimplicial abelian group $X$,
one may associate a double complex $[X]$ as in the simplicial case. To a bisimplicial object
$X$ one may associate the diagonal simplicial object $\delta X$:
\[(\delta X)_n=X_{n,n}\]
and to a double complex $C$ one may associate the total complex $\Tot C$:
\[(\Tot C)_n = \bigoplus_{p+q=n} C_{p,q}.\]
Then, given the diagram of functors
\[\begin{CD}
(\bDelta\times \bDelta)^\op \Ab @>[\ ]>> C_{++}(\Ab)\\
@V{\delta}VV @V{\Tot}VV\\
\bDelta^\op \Ab @>[\ ]>> C_+(\Ab)
\end{CD}\]
there exist two natural transformations
\begin{align*}
\text{shuffle}_X:& \Tot  [X]\to [\delta X]\\
\text{Alexander-Whitney}_X:& [\delta X]\to \Tot  [X]
\end{align*}
which are quasi-inverse homotopy equivalences. 
\end{para}

\subsection{Simplicial set-valued coefficients}\label{s1.3}
\begin{para} If $X$ is a simplicial set, we may associate to it the free simplicial abelian
group $\Z X$ generated by $X$, with $(\Z X)_n=\Z X_n$. Similarly with a bisimplicial set. The
\emph{homology} of $X$ is the homotopy of $\Z X$, or equivalently the homology of $[\Z X]$.
Similarly with coefficients in an abelian group $A$, using $\Z X\otimes A$.

We shall usually write $[\Z X]=:C_*(X)$; if $X =N(\sD)$ for a category $\sD$, we abbreviate $C_*(X)$ into $C_*(\sD)$.
\end{para}

\begin{para}\label{sset} Let $\sD\in \Cat$ and let $\F:\sD\to \sSet$ be a functor. We may
generalise the definition of \ref{2} to get a \emph{bisimplicial set} $N(\sD,\F)$:
\[N_{p,q}(\sD,\F)=\coprod_{d_0\to \dots \to d_p} \F_q(d_0), \; d_i\in \sD.\]
We may then take the diagonal $\delta N(\sD,\F)$, which is a simplicial set. We define
\begin{align*}
\pi_i(\sD,\F;(d_0,x_0))&= \pi_i(\delta N(\sD,\F);(d_0,x_0))\\
C_*(\sD,\F;A) & = C_*(\delta N(\sD,\F))\otimes A\\
H_i(\sD,\F;A) &= H_i(C_*(\sD,\F;A))
\end{align*}
for $d_0\in \sD_0$ and $x_0\in F_0(d_0)$ a chosen base point, and for $A$ an abelian group of
coefficients.
\end{para}

\begin{lemma}\label{l6} a) Let $X=(X_{p,q}), Y=(Y_{p,q})$ be two bisimplicial sets and $\phi:X\to Y$ a bisimplicial map. Suppose that, for each $p\ge 0$, $\phi_{p,*}:X_{p,*}\to Y_{p,*}$ is a weak equivalence. Then $\delta \phi:\delta X\to \delta Y$ is a weak equivalence.\\
b) Let $\F,\G:\sD\to \sSet$ be two functors, and let $\phi:\F\to \G$ be a morphism of functors. Suppose that, for each $d\in \sD$, $\phi(d):\F(d)\to \G(d)$ is a weak equivalence. Then $\delta N(\sD,\phi):\delta N(\sD,\F)\to \delta N(\sD,\G)$ is a weak equivalence.
\end{lemma}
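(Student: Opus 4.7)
The plan is to reduce (b) to (a), and to derive (a) from the classical \emph{realization lemma} for bisimplicial sets.

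For part (a), this is standard: a levelwise weak equivalence of bisimplicial sets induces a weak equivalence on diagonals. The cleanest route is to simply cite Goerss--Jardine \cite[Ch.\ IV]{gj}, which the paper already relies on in Section~\ref{s1.1}. For a self-contained argument, I would view $X$ and $Y$ as simplicial objects in $\sSet$ under the Reedy model structure on $\sSet^{\bDelta^{\op}}$: the diagonal then models the homotopy colimit over $\bDelta^{\op}$, and homotopy colimits preserve pointwise weak equivalences. A more hands-on alternative is to filter $\delta X$ and $\delta Y$ by the skeletal filtration of the first simplicial direction and run a spectral sequence / five-lemma comparison argument, which reduces matters to showing that each pushout stage of the filtration is a weak equivalence, in turn following from the levelwise hypothesis together with the gluing lemma for simplicial sets.

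For part (b), I would apply (a) to the induced bisimplicial map $N(\sD,\phi): N(\sD,\F) \to N(\sD,\G)$. Unwinding the definition from \ref{sset}, the map at fixed simplicial degree $p$ is
\[\coprod_{d_0\to \dots\to d_p} \phi(d_0): \coprod_{d_0\to \dots\to d_p} \F(d_0) \longrightarrow \coprod_{d_0\to \dots\to d_p} \G(d_0),\]
that is, a disjoint union of the component maps $\phi(d_0)$, each of which is a weak equivalence by hypothesis. Since a disjoint union of weak equivalences of simplicial sets is again a weak equivalence (weak equivalences are detected component-by-component on $\pi_0$ and on all higher $\pi_n$ relative to any basepoint), the hypothesis of (a) is satisfied at every level $p$, and the conclusion of (b) follows.

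The only nontrivial step is (a); everything in (b) is routine unwinding of definitions and the observation that coproducts preserve weak equivalences. If one does not wish to quote the realization lemma as a black box, the main obstacle is assembling just enough of the Reedy / gluing-lemma machinery to justify that the diagonal of a bisimplicial set takes levelwise weak equivalences to weak equivalences.
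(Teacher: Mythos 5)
Your proposal is correct and follows essentially the same route as the paper: part (a) is handled by citing the standard realization lemma from Goerss--Jardine (the paper invokes the Bousfield--Friedlander theorem, \cite[Th.\ 4.9 p.\ 229]{gj} with $Y=W=*$, as its instance of this), and part (b) is deduced from (a) by observing that $N(\sD,\phi)$ is, in each simplicial degree $p$, a coproduct of the weak equivalences $\phi(d_0)$. The paper leaves the level-$p$ unwinding in (b) implicit, so your explicit verification is a harmless elaboration rather than a different argument.
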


\begin{proof} a) is well-known (for example, it is the special case of the Bous\-field-Friedlander theorem of \cite[Th. 4.9 p. 229]{gj} where $Y=W=*$), and b) follows from a).
\end{proof}

\begin{para} For an abelian group $A$ and for $q\ge 0$, we may consider the additive functor
\begin{align*}
H_q(\F,A):\sD&\to \Ab\\
d&\mapsto H_q(\F(d),A).
\end{align*}
This gives a meaning to:
\end{para}

\begin{lemma}\label{l4} There is a spectral sequence
\[E^2_{p,q}=H_p(\sD,H_q(\F,A))\Rightarrow H_{p+q}(\sD,\F;A)\]
\end{lemma}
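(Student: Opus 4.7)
The plan is to apply the standard column-filtration spectral sequence of a first-quadrant double complex to
\[X_{p,q} = \Z N_{p,q}(\sD,\F)\otimes A = \bigoplus_{d_0\to\dots\to d_p}\Z\F_q(d_0)\otimes A,\]
viewed as a bisimplicial abelian group, with associated double complex $[X]$ in the sense of \ref{ez}. Being first-quadrant, its column-filtration spectral sequence converges strongly to $H_{p+q}(\Tot[X])$. By Eilenberg-Zilber-Cartier (\ref{ez}), $\Tot[X]$ is naturally quasi-isomorphic to $[\delta X]$; since $(\delta X)_n = \Z N_{n,n}(\sD,\F)\otimes A = (\Z\,\delta N(\sD,\F)\otimes A)_n$, we have $[\delta X] = C_*(\sD,\F;A)$, so the abutment is $H_{p+q}(\sD,\F;A)$.

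Next I would compute $E^1_{p,q}$, i.e.\ the vertical homology of the $p$-th column. The $p$-th column is the direct sum over chains $d_0\to\dots\to d_p$ in $\sD$ of the complexes $[\Z\F_*(d_0)]\otimes A$, the vertical differential acting termwise as the alternating sum of face maps of $\F_*(d_0)$. Taking $H_q$ in the vertical direction therefore yields
\[E^1_{p,q} = \bigoplus_{d_0\to\dots\to d_p} H_q(\F(d_0);A) = N_p(\sD,H_q(\F,A))\]
for the additive functor $H_q(\F,A):\sD\to\Ab$ introduced just before the lemma.

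It remains to check that the horizontal differential induced on $E^1_{*,q}$ coincides with the alternating face differential of the simplicial abelian group $N(\sD,H_q(\F,A))$; once this is done, $E^2_{p,q} = H_p(\sD,H_q(\F,A))$ by definition. This is a straightforward unwinding of the bisimplicial structure: on the summand indexed by $d_0\to\dots\to d_p$, the horizontal faces of $N(\sD,\F)$ are the nerve faces on the chain (composing or deleting $d_i$), with the outer face $d_0^h$ realized by the transition map $\F(d_0\to d_1)$; functoriality of $H_q(-;A)$ converts these, after passage to vertical homology, into the face maps of $N(\sD,H_q(\F,A))$. The main obstacle is purely notational: once one writes out the face maps explicitly and confirms their compatibility with $H_q(-;A)$, the conclusion is immediate.
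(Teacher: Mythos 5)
Your proof is correct and follows essentially the same route as the paper: identify $H_*(\sD,\F;A)$ with the homology of the total complex of the bisimplicial abelian group via the Eilenberg--Zilber--Cartier theorem of \ref{ez}, then take the column-filtration (first) spectral sequence of that double complex, whose $E^1$- and $E^2$-terms you work out explicitly where the paper simply cites Cartan--Eilenberg \cite[Ch. XV, \S 6, formula (1)]{ce}. No gap; your explicit identification of $E^1_{p,q}$ and of the induced $d^1$ with the face differential of $N(\sD,H_q(\F,A))$ is exactly what that reference encodes.
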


\begin{proof} We shall use \ref{ez}: it implies that $[\delta \Z N(\sD,\F)]$ is homotopy
equivalent to $\Tot [\Z N(\sD,\F)]$. Therefore
\begin{multline*}
H_*(\sD,\F;A) := H_*([\delta \Z N(\sD,\F)]\otimes A)\\
\simeq H_*(\Tot [\Z N(\sD,\F)]\otimes A)=
H_*(\Tot\bigoplus_{p,q} \bigoplus_{d_0\to\dots\to d_p} [\Z F_q(d_0)]\otimes A).
\end{multline*}

Consider the first spectral sequence associated to this double
complex in Cartan-Eilenberg \cite[Ch. XV, \S 6]{ce}: the formula (1) of loc. cit, p. 331 shows
that it is the desired spectral sequence.
\end{proof}

\subsection{Category-valued coefficients} (See also \cite[IV.3]{weibel}.)

\begin{para} Let $\sD\in \Cat$ and let $\F:\sD\to \Cat$ be a functor. Composing $\F$ with the
nerve functor, we get a functor $N\circ \F:\sD\to \sSet$, hence a bisimplicial set as in 
\ref{sset}
\[N(\sD,\F)= N(\sD,N(\F)).\]
\end{para}

\begin{para}\label{cat} We now extend the construction in \ref{3}. This yields the category
$\sD\int \F$ (Grothendieck construction, \cite[Exp. VI, \S\S 8,9]{sga1}):
\begin{itemize}
\item Objects are pairs $(d,x)$, $d\in Ob(\sD), x\in Ob (\F(d))$.
\item For two objects $(d,x), (d',x')$, a morphism $(d,x)\to (d',x')$ is a morphism
$f:d\to d'$ and a morphism $g:\F(f)(x)\to x'$.
\item For three objects $(d,x),(d',x'), (d'',x'')$ and two morphisms $(f,g):(d,x)\to
(d',x')$, $(f',g'):(d',x')\to (d'',x'')$, $(f',g')\circ (f,g) := (f'\circ f, g'\circ
\F(f')(g))$.
\end{itemize}

The Grothendieck construction is covariant in $\F$. In particular, there is a canonical functor
$\sD\int\F\to \sD$ induced by the morphism $\F\to *$, where $*$ is the constant functor with
value the point category. We call this functor the \emph{augmentation}.

Lemma \ref{l1} then generalises as
\end{para}

\begin{thm}[Thomason \protect{\cite[Th. 1-2]{thomason}}]\label{t1} There is a canonical weak
equivalence 
\[\delta N(\sD,\F)\to N(\sD\int \F)\]
sending a cell $(d_0\by{f_1}\dots\by{f_n} d_n,x_0\by{g_1}\dots\by{g_n} x_n)$ ($x_i\in F(d_0)$)
to the cell
\[\left((d_0,y_0)\by{h_1}\dots\by{h_n} (d_n, y_n)\right)\quad (y_i\in
F(d_i))\] 
with $y_i:= F(f_i\dots f_1)x_i $ and $h_i = (f_i,F(f_i\dots f_1)g_i)$.\qed
\end{thm}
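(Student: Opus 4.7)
The first step is to verify that the formula does define a simplicial map $\delta N(\sD,\F)\to N(\sD\int\F)$. Given an $n$-cell $(d_0\by{f_1}\cdots\by{f_n}d_n,\;x_0\by{g_1}\cdots\by{g_n}x_n)$ with $x_i\in\F(d_0)$, functoriality of $\F$ gives $y_i:=\F(f_i\cdots f_1)(x_i)\in\F(d_i)$, and the identity $\F(f_i)(y_{i-1})=\F(f_i\cdots f_1)(x_{i-1})$ shows that $h_i=(f_i,\F(f_i\cdots f_1)(g_i))$ is a legitimate morphism $(d_{i-1},y_{i-1})\to(d_i,y_i)$ in $\sD\int\F$, its second coordinate having the correct source. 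Compatibility with the diagonal face and degeneracy operators is then a direct verification, using on the one hand the functoriality of $\F$ and on the other the composition rule $(f',g')\circ(f,g)=(f'f,\,g'\F(f')(g))$ from \ref{cat}; the only case requiring actual computation is $d_0$, which drops $d_0$ \emph{and} transports $x_0$ by $\F(f_1)$, matching the composition of the first two arrows of the image chain.

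To prove the weak equivalence, my plan is to realize both sides as diagonals of bisimplicial sets and invoke Lemma~\ref{l6}(a). The canonical factorization $(f,g)=(\mathrm{id},g)\circ(f,\mathrm{id})$ turns $\sD\int\F$ into a strict double category whose horizontal arrows are the $(f,\mathrm{id})$ and vertical arrows the $(\mathrm{id},g)$. Let $W$ be its double nerve, with $W_{p,q}$ the set of $p\times q$ grids of commuting squares. A direct computation shows that such a grid is completely determined by its top row (a $p$-chain $d_0\to\cdots\to d_p$ in $\sD$ together with a base object $x_0\in\F(d_0)$) and its leftmost column (a $q$-chain $x_0\to\cdots\to x_q$ in $\F(d_0)$), so that $W\cong N(\sD,\F)$ as bisimplicial sets. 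Under this identification, the ``read off the diagonal'' map $\delta W\to N(\sD\int\F)$ picks out $(d_i,y_{ii})$ with $y_{ii}=\F(f_i\cdots f_1)(x_i)$ and morphisms $(f_i,\F(f_i\cdots f_1)(g_i))$—which is precisely the formula in the statement.

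The main obstacle is then to prove that this diagonal comparison $\delta W\to N(\sD\int\F)$ is a weak equivalence. It is neither injective nor surjective on $n$-simplices (an $n$-chain in $\sD\int\F$ need not admit a unique, or even any, filling into a commutative $n\times n$ grid), so one really is making a homotopical statement: the diagonal of the double nerve of a strict double category is weakly equivalent to the nerve of its underlying one-category. I would prove this by a direct bisimplicial argument, interpolating through a bar-type bisimplicial model and applying Lemma~\ref{l6}(a) in each variable, rather than via Quillen's Theorem A. Once that general fact is in hand, combining it with the isomorphism $W\cong N(\sD,\F)$ yields the theorem together with the explicit description of the comparison map.
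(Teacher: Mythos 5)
Your first two steps are sound: the verification that the formula is simplicial is essentially right (the inner faces use the composition rule of \ref{cat}, while $d_0$ involves transport by $\F(f_1)$ and no composition --- both routine), and the identification of $N(\sD,\F)$ with the double nerve $W$ of the double category of squares in $\sD\int\F$ coming from the factorisation $(f,g)=(\mathrm{id},g)\circ(f,\mathrm{id})$ is correct; reading off diagonal vertices and diagonal composites of a grid does recover exactly the map in the statement. The genuine gap is your last step, which is the entire homotopical content of the theorem. You reduce to the assertion that ``the diagonal of the double nerve of a strict double category is weakly equivalent to the nerve of its underlying one-category'' and then only gesture at a proof. As stated in that generality the assertion is false: a $2$-category viewed as a double category (identity vertical arrows, squares the $2$-cells) with one object, only identity arrows and a nontrivial abelian group $A$ of squares has $\delta W$ of the homotopy type of $K(A,2)$, while the underlying category is trivial. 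What makes your $W$ special is that a square is uniquely determined by its top and left edges (the horizontal arrows $(f,\mathrm{id})$ are the ``cocartesian'' ones); with that hypothesis the assertion is precisely Thomason's theorem again, not a formal consequence of Lemma \ref{l6}: Lemma \ref{l6}(a) requires a bisimplicial map that is a levelwise weak equivalence, and your comparison $\delta W\to N(\sD\int\F)$ is not presented in that form, nor is the promised ``bar-type bisimplicial model'' constructed.

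The missing argument can in fact be supplied along the lines you hint at. For instance, interpolate with the bisimplicial set $Z_{p,q}=\coprod_{d_0\to\cdots\to d_p}N_q(\epsilon\downarrow d_0)$, where $\epsilon:\sD\int\F\to\sD$ is the augmentation of \ref{cat}: for fixed $q$ it is a disjoint union, indexed by the $q$-simplices of $N(\sD\int\F)$, of nerves of under-categories $a_q\downarrow\sD$, each contractible because it has an initial object; for fixed $p$ the inclusions $\F(d_0)\to\epsilon\downarrow d_0$ are part of adjunctions, hence nerve equivalences. Two applications of Lemma \ref{l6}(a) then give a zigzag of weak equivalences between $\delta N(\sD,\F)$ and $N(\sD\int\F)$, after which one must still check compatibility (up to homotopy) of this zigzag with the explicit diagonal map of the statement. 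None of this is in your write-up, so as it stands the crucial step is asserted rather than proved. Note that the paper itself does not prove the theorem either: it cites Thomason, whose proof is a genuine piece of work, so this step cannot be waved through.
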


\begin{para}\label{adj}
Let $T:\sC\to \sD$ be a functor between two small categories. To $T$ we
associate the functor $\F_T:\sD\to \Cat$ sending $d$ to $T\downarrow d$. The category
$\sD\int\F_T$ may be identified with the comma category $T\downarrow Id_\sD$. Therefore
there are $3$ functors:
\[p_1:\sD\int \F_T\to \sC,p_2:\sD\int \F_T\to \sD, s:\sC\to \sD\int \F_T\]
with
\[p_1([T(c)\by{\phi} d])=c,p_2([T(c)\by{\phi} d])=d, s(c)=[T(c)=T(c)].\]
\end{para}
We have:

\begin{lemma}\label{l3} $s$ is left adjoint to $p_1$. Hence $s$ and $p_1$ induce quasi-inverse
homotopy equivalences
$N(\sC)\simeq N(\sD\int \F_T)$.\qed
\end{lemma}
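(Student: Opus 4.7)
The plan is to unwind the definition of $\sD\int\F_T$ under the identification $\sD\int \F_T \simeq T\downarrow Id_\sD$ in order to exhibit an explicit hom-bijection witnessing $s\dashv p_1$, and then invoke the standard fact that adjoint functors induce homotopy equivalences on nerves.

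First I would spell out morphisms concretely. By definition of the Grothendieck construction, a morphism $[T(c)\by{\phi}d]\to [T(c')\by{\phi'}d']$ in $\sD\int\F_T$ consists of a pair $(f,h)$ with $f:d\to d'$ in $\sD$ and $h:c\to c'$ in $\sC$ such that $\phi'\circ T(h)=f\circ \phi$, i.e.\ a commutative square
\[
\xymatrix{T(c)\ar[r]^{T(h)}\ar[d]_\phi & T(c')\ar[d]^{\phi'}\\ d\ar[r]_{f}& d'.}
\]
Now take $(d,[T(c)\by{\phi}d])=s(c)=(T(c),[T(c)=T(c)])$, so that $\phi=\mathrm{id}_{T(c)}$. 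The square above then reduces to the equation $f=\phi'\circ T(h)$, so $f$ is entirely determined by $h$. This yields a bijection
\[
(\sD\int\F_T)\bigl(s(c),\,[T(c')\by{\phi'}d']\bigr)\;\iso\;\sC(c,c')=\sC\bigl(c,\,p_1([T(c')\by{\phi'}d'])\bigr),
\]
sending $(f,h)\mapsto h$, with inverse $h\mapsto (\phi'\circ T(h),h)$. Naturality in $c$ and in $[T(c')\by{\phi'}d']$ is a direct check from the definition of composition in the Grothendieck construction, so $s\dashv p_1$.

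From this adjunction one reads off: $p_1\circ s=\mathrm{Id}_\sC$ strictly, with unit the identity, and the counit $\epsilon:s\circ p_1\to \mathrm{Id}_{\sD\int\F_T}$ is the natural transformation whose component at $[T(c)\by{\phi}d]$ is the morphism $(\phi,\mathrm{id}_c):[T(c)=T(c)]\to [T(c)\by{\phi}d]$. Since any natural transformation between functors of small categories induces a simplicial homotopy on nerves, $N(\epsilon)$ is a homotopy from $N(s)\circ N(p_1)$ to the identity, while $N(p_1)\circ N(s)=\mathrm{id}_{N(\sC)}$. Hence $N(s)$ and $N(p_1)$ are quasi-inverse homotopy equivalences, which is the second assertion.

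There is no real obstacle: everything reduces to carefully unpacking the definition of the Grothendieck construction on $\F_T$ in the case where the source object has $\phi=\mathrm{id}$. The one point that requires a moment of attention is recognising that the counit is not in general invertible (it becomes $\mathrm{id}$ only after applying $p_1$), so that the homotopy equivalence on nerves genuinely uses the fact that a natural transformation, not an isomorphism, suffices.
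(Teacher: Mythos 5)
Your proof is correct and is exactly the argument the paper has in mind: the paper states Lemma \ref{l3} with no written proof (it is marked as immediate), and your explicit hom-set bijection $(\sD\int\F_T)(s(c),[T(c')\by{\phi'}d'])\iso\sC(c,c')$, together with the standard fact that a natural transformation (here the counit $(\phi,\mathrm{id}_c)$) induces a simplicial homotopy on nerves, is precisely the standard verification being left to the reader. No gaps; the identification of morphisms in $\sD\int\F_T$ with commutative squares, i.e.\ with the comma category $T\downarrow Id_\sD$, matches \S\ref{adj}.
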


From Theorem \ref{t1} and Lemma \ref{l4}, we deduce

\begin{cor}[cf. \protect{\cite[App. II]{GZ}}]\label{c1} There is a spectral sequence
\[E^2_{p,q} = H_p(\sD,H_q(\F_T,A))\Rightarrow H_{p+q}(\sC,A)\]
for any abelian group $A$.\qed
\end{cor}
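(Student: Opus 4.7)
The plan is to assemble the spectral sequence directly from Lemma \ref{l4} and then identify its abutment using Theorem \ref{t1} and Lemma \ref{l3}.

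First I would apply Lemma \ref{l4} to the functor $N \circ \F_T : \sD \to \sSet$ obtained by composing $\F_T : \sD \to \Cat$ with the nerve functor. Since the nerve functor is compatible with the definition of $H_q(\F_T, A)$ (the homology of $\F_T(d) = T \downarrow d$ is by definition the homology of its nerve), this yields a spectral sequence
\[E^2_{p,q} = H_p(\sD, H_q(\F_T, A)) \Rightarrow H_{p+q}(\sD, \F_T; A),\]
where the abutment is $H_{p+q}(\delta N(\sD, \F_T); A)$ by the definitions in \ref{sset}.

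Next, I would identify this abutment with $H_{p+q}(\sC, A)$. By Theorem \ref{t1}, there is a canonical weak equivalence
\[\delta N(\sD, \F_T) \xrightarrow{\sim} N(\sD \int \F_T),\]
so the two spaces have the same homology. By Lemma \ref{l3}, the adjunction between $s : \sC \to \sD \int \F_T$ and $p_1 : \sD \int \F_T \to \sC$ yields a homotopy equivalence $N(\sC) \simeq N(\sD \int \F_T)$. Composing, $H_{p+q}(\sD, \F_T; A) \cong H_{p+q}(\sC, A)$.

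This is essentially a matter of chaining three results, so there is no real obstacle — Lemma \ref{l4} gives the spectral sequence, and Theorem \ref{t1} combined with Lemma \ref{l3} identifies the target. If I had to single out a point requiring a moment's care, it would be verifying that the definition of $H_q(\F_T, A)$ as the homology of the \emph{category} $\F_T(d)$ agrees with its interpretation as the homology of the simplicial set $N(\F_T(d))$ — but this is tautological given the conventions set up in \ref{s1.3}. The result is then obtained simply by substituting the identification of abutments into the spectral sequence of Lemma \ref{l4}.
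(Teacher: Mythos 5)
Your proposal is correct and follows exactly the paper's route: the paper deduces the corollary precisely from Lemma \ref{l4} (applied to $N\circ\F_T$) together with Theorem \ref{t1}, with Lemma \ref{l3} supplying the identification $N(\sD\int\F_T)\simeq N(\sC)$. Nothing in your argument departs from, or adds a gap to, what the paper intends.
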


\section{A long homology exact sequence}

\subsection{Spectral sequences and exact couples}

\begin{para}\label{2.1.1} Let $\sT$ be a triangulated category with countable direct sums, and
let $C_0\by{i_1} \dots \by{i_n} C_n\by{i_{n+1}}\dots$ be a sequence of objects of $\sT$. Let
$C$ be a homotopy colimit (mapping telescope) of the $C_n$ \cite{bok-nee}. Let $H:\sT\to \sA$ be a
(co)homological functor to some abelian category $\sA$: we assume that
$H$ commutes with countable direct sums.  (Alternately, we could refuse infinite direct sums
and assume that $i_n$ is an isomorphism for $n$ large enough.) To get an associated spectral
sequence, the simplest is the technique of exact couples \cite[pp. 152--153]{geomcell}: for each $n$, choose a cone
$C_{n/n-1}$ of $f_n$, so that the exact triangles
\[C_{p-1}\by{i_p} C_p\by{j_p} C_{p/p-1} \by{k_p} C_{p-1}[1]\]
yield long homology exact sequences
\[\dots H_n(C_{p-1})\by{i_{p,n}} H_n(C_p)\by{j_{p,n}} H_n(C_{p/p-1})\by{k_{p,n}} H_{n-1}(C_{p-1})\dots\]
where $H_n(X):=H(X[-n])$. The exact couple defined by 
\[D_{p,q} = H_{p+q}(C_p),\quad E_{p,q} = H_{p+q}(C_{p/p-1})\]
and the relevant $i,j,k$ define a spectral sequence abutting to $H_{p+q}(C)$. The $E^1$-terms of this spectral sequence are simply $E^1_{p,q}=E_{p,q}$, and $d^1_{p,q} = j_{p-1,p+q-1}k_{p,p+q}$ is induced by $j_{p-1}[1]k_p$.

Here is a more concrete description of this differential:
\end{para}

\begin{lemma}\label{l2.1} Let $C_{p/p-2}$ be a cone for $i_pi_{p-1}$, so that we may obtain a commutative diagram
\[\begin{CD}
C_{p-2} @= C_{p-2}  \\
@Vi_{p-1}VV @V{i_pi_{p-1}}VV\\
C_{p-1} @>i_p>> C_p @>j_p>> C_{p/p-1}@>k_p>> C_{p-1}[1] \\
@Vj_{p-1}VV @VVV @V{=}VV@Vj_{p-1}[1]VV\\
C_{p-1/p-2} @>\bar i_p>> C_{p/p-2} @>\bar j_p>> C_{p/p-1} @>\bar k_p>> C_{p-1/p-2}[1] 
\end{CD}\]
of exact triangles (from the suitable axiom of triangulated categories). Then $d^1_{p,q}$ is the boundary map $\bar k_{p,n}$ in the long exact sequence
\begin{multline*}
\dots H_{p+q}(C_{p-1/p-2})\by{\bar i_{p,n}} H_{p+q}(C_{p/p-2})\\
\by{\bar j_{p,n}} H_{p+q}(C_{p/p-1})\by{\bar k_{p,n}} H_{p+q-1}(C_{p-1/p-2})\dots
\end{multline*}
\end{lemma}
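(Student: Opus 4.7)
The plan is straightforward: I would read the definition of $d^1_{p,q}$ off the exact couple in paragraph \ref{2.1.1}, and then identify it with $\bar k_{p,p+q}$ using the commutativity of the rightmost square of the diagram supplied in the statement.

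First, recall from the construction of the exact couple that with $D_{p,q}=H_{p+q}(C_p)$ and $E^1_{p,q}=H_{p+q}(C_{p/p-1})$, the differential $d^1_{p,q}$ is the composite
\[
H_{p+q}(C_{p/p-1}) \by{k_{p,p+q}} H_{p+q-1}(C_{p-1}) \by{j_{p-1,p+q-1}} H_{p+q-1}(C_{p-1/p-2}),
\]
as spelled out at the end of \ref{2.1.1}. The target $H_{p+q-1}(C_{p-1/p-2})$ is precisely the target of $\bar k_{p,p+q}$ in the long exact sequence attached to the bottom triangle of the diagram, so the two arrows are at least comparable.

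Next, the existence of an object $C_{p/p-2}$ together with the displayed morphism of triangles is furnished by the octahedral axiom applied to the composable pair $(i_{p-1},i_p)$. From that data the only piece actually needed is the commutativity of the rightmost square, which asserts the identity
\[
\bar k_p = j_{p-1}[1]\circ k_p \colon C_{p/p-1}\longrightarrow C_{p-1/p-2}[1]
\]
in $\sT$. Applying the (co)homological functor $H_{p+q}(-):=H((-)[-p-q])$ to this identity, and using the standard shift relation $H_{p+q}(X[1])=H_{p+q-1}(X)$, converts it into
\[
\bar k_{p,p+q} = j_{p-1,p+q-1}\circ k_{p,p+q} = d^1_{p,q},
\]
which is the desired equality. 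There is no genuine obstacle here: the whole content is the octahedral axiom (already absorbed into the hypotheses) together with an unwinding of the definitions of the exact couple and its associated differential.
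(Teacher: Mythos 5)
Your proposal is correct and is essentially identical to the paper's own (one-line) proof: the paper also just observes that the octahedral diagram gives $\bar k_p = j_{p-1}[1]\circ k_p$, which upon applying $H_{p+q}$ yields $\bar k_{p,p+q}=j_{p-1,p+q-1}\circ k_{p,p+q}=d^1_{p,q}$. Your write-up merely makes the shift bookkeeping explicit.
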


\begin{proof} The diagram shows that $\bar k_p = j_{p-1}[1]\circ k_p$.
\end{proof}

\begin{para}\label{2.1.3} In the usual case of a filtered complex $C_p=F_p C$, we may of course choose $C_{p/p-1}=F_pC/F_{p-1} C$ and $C_{p/p-2}=F_p C/F_{p-2} C$.
\end{para}

\begin{para}\label{2.1.4} Let $\sQ_0\by{T_1} \sQ_1\by{T_2} \dots\to \sQ_n\to\dots$ be a sequence of
categories and functors. Let $\sQ= \colim \sQ_n$. (Since there are no natural transformations involved this is a na\"\i ve colimit, defined objectwise and morphismwise.) Considering the corresponding sequence of chain complexes of nerves
\[C_*(\sQ_0)\by{i_1} C_*(\sQ_1)\dots\]
we get from the yoga of \ref{2.1.1} a spectral sequence abutting to $H_*(\sQ)$ (possibly with coefficients).

If the functors $T_n$ are faithful and injective on objects, the maps $i_n$ are injective and we are in the simpler situation of \ref{2.1.3}.
\end{para}

\subsection{Unreduced and reduced homology}

\begin{para}\label{red1} Let $(X,x)$ be a pointed simplicial set. The \emph{reduced homology} of $X$ with coefficients in an abelian group $A$ is
\begin{multline*}
\tilde H_i(X,A) = H_i(X,x;A) := \Coker(H_i(x,A)\to H_i(X,A))\\ (=H_i(X,A) \text{ if } i\ne 0).
\end{multline*}

This definition apparently depends on the choice of $x$: if we don't want to choose a basepoint, we may alternately define
\[\tilde H_i(X,A)=\Ker(H_i(X,A)\to H_i(*,A)).\]

Any choice of $x\in X_0$ will split the map $X\to *$, realising $\tilde H_i(X,A)$ as the above-described direct summand of $H_i(X,A)$.

A homotopy cofibre sequence $X\to Y\to Z$ is equivalent to a homotopy cocartesian square
\[\begin{CD}
X@>>> Y\\
@VVV @VVV\\
*@>>> Z
\end{CD}\]
hence the corresponding long exact homology sequence may be written (via Mayer-Vietoris!) as
\[\dots H_i(X,A)\to H_i(Y,A)\to \tilde H_i(Z,A)\to H_{i-1}(X,A)\to\dots\]
\end{para}

\begin{para}\label{red2} Suppose $\F:\sD\to \sSet$ is a functor; suppose that $\F(d)\neq
\emptyset$ for any $d\in \sD$. We then define
\[C_*(\sD,\tilde \F;A)= \Ker\left(C_*(\sD,\F)\to C_*(\sD)\right)\otimes A\]
where the last map is induced by the natural transformation $\F\to *$, and
\[H_i(\sD,\tilde \F;A)= H_i(C_*(\sD,\tilde \F;A)).\]

(Here the map $\F\to *$ is not necessarily split, so we have to be more careful. We think of
$\tilde \F$ as a desuspension of the homotopy cofibre of $\F\to *$.)

More generally, if $\F\to \G$ is a morphism of functors, we define
\[C_*(\sD,\F\to \G;A) = cone\left(C_*(\sD,\F)\to C_*(\sD,\G)\right)[-1].\]

If $\G=*$, then $C_*(\sD,\F\to *;A)$ is homotopy equivalent to $C_*(\sD,\tilde \F;A)$ under the
nonemptiness assumption on $\F$.
\end{para}

\begin{para} Suppose $\F:\sD\to \Cat$ is a functor. We have the projection $\F\to *$, where
$*$ is the constant functor with values the category with $1$ onject and $1$ morphism. As in
\ref{red2} we define
\begin{align*}
C_*(\sD,\tilde \F;A)&= \Ker\left(C_*(\sD,\F)\to C_*(\sC)\right)\otimes A\\
H_i(\sD,\tilde \F;A)&= H_i(C_*(\sD,\tilde \F;A)).
\end{align*}
provided $\F(d)\neq \emptyset$ for any $d\in \sD$. In general, we define $H_*(\sD,\tilde\F;A)$
as the homology of $C_*(\sD,\F\to *;A)$ as before.
\end{para}

\subsection{Cellular functors}

\begin{lemma}\label{l10} Let $\F_T$ be as in \ref{adj}. Suppose $T$ fully faithful. Then, for any $c\in \sC$, $\F_T(T(c))$ has a final object.
\end{lemma}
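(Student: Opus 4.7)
The plan is to exhibit an explicit final object, namely the identity morphism, and then verify finality using only the fully faithful hypothesis on $T$.

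More precisely, recall from \ref{adj} that $\F_T(T(c)) = T\downarrow T(c)$: its objects are pairs $(c',\phi)$ with $c'\in \sC$ and $\phi\in \sD(T(c'),T(c))$, and a morphism $(c',\phi)\to (c'',\phi')$ is a morphism $f\in \sC(c',c'')$ such that $\phi'\circ T(f)=\phi$. My candidate final object is
\[
\omega_c := (c,\operatorname{id}_{T(c)})\in \F_T(T(c)).
\]

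To check finality, I take an arbitrary object $(c',\phi)$ and consider the set of morphisms $(c',\phi)\to \omega_c$: by definition, this is the set of $f\in \sC(c',c)$ satisfying $\operatorname{id}_{T(c)}\circ T(f)=\phi$, i.e., $T(f)=\phi$. Since $T$ is fully faithful, the map $T\colon \sC(c',c)\to \sD(T(c'),T(c))$ is a bijection, so there is exactly one such $f$. Hence $\omega_c$ is final.

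There is really no obstacle here; the only thing to be careful about is the direction of composition in the definition of a morphism in the comma category, but once that is pinned down the argument reduces to the defining bijection of a fully faithful functor.
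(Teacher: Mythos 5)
Your proof is correct and is essentially the paper's argument: the paper simply exhibits $[T(c)=T(c)]$ as the final object, and your verification via the bijection $\sC(c',c)\iso\sD(T(c'),T(c))$ spells out exactly why full faithfulness makes it final.
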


\begin{proof} Such a final object is given by $[T(c)=T(c)]$. 
\end{proof}

\begin{defn}\label{cell} Let $T:\sC\to \sD$ be a functor. We say that $T$ is \emph{cellular} if
\begin{itemize}
\item $T$ is fully faithful.
\item For any $d\in \sD-\sC$ and any $c\in \sC$, $\sD(d,c)=\emptyset$.
\end{itemize}
\end{defn}

If $T$ is cellular, then it defines a ``stratification" $\sD = \sC\coprod (\sD-\sC)$ in the
following sense:

\begin{lemma}\label{l2.2} Let $T:\sC\to \sD$ be a cellular functor; let $F_1,F_2:\sD\to \sE$ be two functors and let $\phi:F_1\Rightarrow F_2$ be a natural transformation. Then there is a unique factorisation of $\phi$ as a composition
\[F_1\overset{\phi_1}{\Rightarrow}F_\phi\overset{\phi_2}{\Rightarrow}F_2\]
such that $F_\phi(d)=F_2(d)$, $\phi_2(d)=1_{F_2(d)}$ for $d\in \sD-T(\sC)$, and  $F_\phi T(c)=F_1T(c)$, $\phi_1(T(c))=1_{F_1T(c)}$ for $c\in \sC$.
\end{lemma}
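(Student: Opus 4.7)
The plan is to exploit the cellularity of $T$ to partition both the objects and morphisms of $\sD$ into disjoint strata, and then to define $F_\phi$, $\phi_1$, $\phi_2$ piece by piece in the only way the boundary conditions allow. On objects, set $F_\phi(d) = F_2(d)$ for $d \in \sD - T(\sC)$ and $F_\phi(T(c)) = F_1(T(c))$ for $c \in \sC$; define $\phi_1$ to be the identity on $T(\sC)$ and equal to $\phi$ on $\sD - T(\sC)$, and $\phi_2$ to be $\phi$ on $T(\sC)$ and the identity on $\sD - T(\sC)$. Then $\phi_2 \circ \phi_1 = \phi$ at each object, and the stated boundary conditions hold by construction.

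The substantive step is to define $F_\phi$ on morphisms. The cellularity condition $\sD(d,T(c)) = \emptyset$ for $d \notin T(\sC)$ means every arrow of $\sD$ has one of exactly three types: entirely in $T(\sC)$, entirely in $\sD - T(\sC)$, or crossing from $T(\sC)$ to $\sD - T(\sC)$. I will set $F_\phi(f) = F_1(f)$ in the first case (using full faithfulness of $T$), $F_\phi(f) = F_2(f)$ in the second case, and $F_\phi(f) = F_2(f) \circ \phi(T(c))$ for a crossing arrow $f\colon T(c) \to d$. The absence of a fourth ``backward'' case is precisely what makes these definitions compatible: a composable pair of arrows of $\sD$ never starts in $\sD - T(\sC)$ and ends in $T(\sC)$.

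Functoriality of $F_\phi$ and naturality of $\phi_1$, $\phi_2$ then reduce to a small case analysis. Within each pure stratum there is nothing to verify beyond functoriality of $F_1$ or $F_2$. For a composable pair crossing the stratification, say $T(c) \by{a} T(c') \by{b} d$ or $T(c) \by{a} d \by{b} d'$, the required equality $F_\phi(ba) = F_\phi(b)\circ F_\phi(a)$ collapses either to naturality of $\phi$ applied to the arrow $a$ of $T(\sC)$, or to functoriality of $F_2$ post-composed with $\phi(T(c))$. The naturality squares of $\phi_1$ and $\phi_2$ unfold the same way. The main obstacle is simply bookkeeping: keeping careful track of which stratum each source and target lies in, and invoking naturality of $\phi$ only at arrows where it is available.

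Uniqueness is forced by these same naturality squares combined with the prescribed identities. On $T(\sC)$ the condition $\phi_1 = \mathrm{id}$ together with naturality of $\phi_1$ forces $F_\phi$ to agree with $F_1$; on $\sD - T(\sC)$ the condition $\phi_2 = \mathrm{id}$ together with naturality of $\phi_2$ forces $F_\phi$ to agree with $F_2$; and on a crossing arrow $f\colon T(c) \to d$, the naturality of $\phi_2$ combined with $\phi_2(d) = 1_{F_2(d)}$ and $\phi_2(T(c)) = \phi(T(c))$ (the latter being forced by $\phi_2 \phi_1 = \phi$ and $\phi_1(T(c)) = 1$) determines $F_\phi(f) = F_2(f) \circ \phi(T(c))$ uniquely.
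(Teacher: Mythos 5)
Your proof is correct and follows essentially the same route as the paper's: the same stratified definition of $F_\phi$ on the three types of arrows (with the crossing case $F_\phi(f)=F_2(f)\circ\phi(T(c))=\phi(d)\circ F_1(f)$), the same use of cellularity to exclude a fourth ``backward'' case, and uniqueness forced by the prescribed identity components. You in fact spell out the bookkeeping and the uniqueness argument in a bit more detail than the paper does, which is fine.
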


\begin{proof} 
For simplicity, we drop the notation $T$ in this proof. Let us define a functor structure on $F_\phi$ as follows: if $f:d\to d$ is a morphism in $\sD$, then three cases may occur:
\begin{itemize}
\item $d,d'\in \sD-\sC$. We define $F_\phi(f)$ as $F_2(f)$.
\item $d\in \sC$, $d'\in \sD-\sC$. We define $F_\phi(f)$ as $\phi_d F_1(f)=F_2(f)\phi_c$.
\item $d,d'\in \sC$. We define $F_\phi(f)$ as $F_1(f)$. 
\end{itemize}

Similarly, we define $\phi_1(d)$ as $\phi(d)$ for $d\in \sD-\sC$ and $\phi_2(c)=\phi(c)$ for $c\in \sC$.

These definitions are the only possible ones if the lemma is to be correct. Checking that $F_\phi$ respects composition of morphisms and that $\phi_1,\phi_2$ do define natural transformations is a matter of case-by-case bookkeeping.
\end{proof}

\begin{prop}\label{p01} Let $T:\sC\to \sD$ and $\phi:F_1\Rightarrow F_2$ be as in Lemma \ref{l2.2}, with $\sE=\sSet$, and let $\sD-\sC$ be the full subcategory of $\sD$ given by the objects not
in $\sC$. Assume that $\phi_{T(c)}$ is a weak equivalence for any $c\in \sC$. Then the commutative diagram of bisimplicial sets
\[\begin{CD}
N(\sD-\sC,F_1) @>>> N(\sD,F_1)\\
@V{\phi}VV @V{\phi}VV\\
N(\sD-\sC,F_2) @>>> N(\sD,F_2)
\end{CD}\]
is homotopy cocartesian,  \emph{i.e.} becomes so after applying the diagonal $\delta$. 
\end{prop}

\begin{proof} Applying Lemma \ref{l2.2}, we can enlarge the above diagram as follows:
\begin{equation}\label{eq2.1}
\begin{CD}
N(\sD-\sC,F_1) @>>> N(\sD,F_1)\\
@V{\phi_1}VV @V{\phi_1}VV\\
N(\sD-\sC,F_\phi) @>>> N(\sD,F_\phi)\\
@V{\phi_2}VV @V{\phi_2}VV\\
N(\sD-\sC,F_2) @>>> N(\sD,F_2)
\end{CD}
\end{equation}

It suffices to show that the top and bottom squares in \eqref{eq2.1} are both homotopy
cocartesian.

By hypothesis $\phi_2(d)$ is a weak equivalence for $d\in \sC$, and it is trivially a weak
equivalence for $d\in \sD - \sC$. Therefore, by Lemma \ref{l6} b), the two vertical maps in the bottom square of \eqref{eq2.1} become weak equivalences after applying $\delta$; \emph{a fortiori}, this bottom square is homotopy cocartesian.

On the top right of \eqref{eq2.1}, a typical term is
\[\coprod_{d_0\to \dots \to d_p} F_1(d_0).\]

We split this coproduct in two parts: one is where all the $d_i$ are in $\sD - \sC$ (call it $A_1$)
and the other is the rest (call it $B_1$). Similarly on the middle right of \eqref{eq2.1} (call them $A_\phi$ and
$B_\phi$).

Now the cellularity assumption implies that all
cells in $B_1$ and $B_\phi$ begin with $d_0\in \sC$. But for such a $d_0$, $F_1(d_0)\to F_\phi(d_0)$ is
a bijection. Thus $B_1\to B_\phi$ is bijective.  On the other hand, by definition of the cofibre,
$A_1$ and $A_\phi$ become a point in the cofibres. Thus the induced map on cofibres is bijective, and the top square of \eqref{eq2.1} is homotopy cocartesian as well.
\end{proof}

\begin{defn}\label{d2} A (naturally) commutative square of categories and functors
\emph{is homotopy cocartesian} if it is after applying the nerve functor.
\end{defn}

\begin{thm}\label{p1} Let $\sD-\sC$ be the full subcategory of $\sD$ given by the objects not
in $\sC$. If
$T$ is cellular, the naturally commutative diagram of categories
\[\begin{CD}
(\sD-\sC)\int\F_T @>{p}>> \sC\\
@V{\epsilon}VV @V{T}VV\\
\sD-\sC @>\iota >> \sD
\end{CD}\]
is homotopy cocartesian, where $\epsilon$ is the augmentation (see
\S\ref{cat}), $p$ is induced by the first projection $p_1$ of Lemma \ref{l3} and $\iota$
is the inclusion.
\end{thm}

\begin{proof} Note first the natural transformation
\begin{align*}
u:T\circ p &\Rightarrow \iota \circ \epsilon\\ 
u_{[T(c)\by{f} d]} &= f
\end{align*}
which explains ``naturally commutative" (here in the weak sense). By Theorem \ref{t1} and Lemma
\ref{l3}, it suffices to prove that the (commutative) diagram of bisimplicial sets
\[\begin{CD}
 N(\sD-\sC,\F_T) @>>> N(\sD,\F_T)\\
@VVV @VVV\\
N(\sD-\sC) @>>> N(\sD)
\end{CD}\]
is homotopy cocartesian, \emph{i.e.} becomes so after applying the diagonal $\delta$.  In view of Lemma \ref{l10}, this is a special case of Proposition \ref{p01}.
\end{proof}

\begin{cor}\label{t2} Let $T:\sC\to \sD$ be a cellular functor. Then the mapping cone of
\[C_*(T):C_*(\sC)\to C_*(\sD)\]
is quasi-isomorphic to $C_*(\sD-\sC,\tilde \F_T)[1]$ (cf. \ref{red2}). In particular, we
have a long exact sequence
\begin{multline*}
\dots\to H_i(\sD-\sC,\tilde \F_T;A)\to H_i(\sC,A)\\
\to H_i(\sD,A)\to  H_{i-1}(\sD-\sC,\tilde \F_T;A)\to \dots
\end{multline*}
for any abelian group $A$.
\end{cor}

\begin{proof} This follows from Theorems \ref{p1} and \ref{t1}.
\end{proof}

\subsection{Cellular filtrations}

\begin{thm}\label{t3} Let $\sQ_1\to \sQ_2\to \dots\to \sQ_n\to\dots\to \sQ$ be a sequence of
categories. We assume:
\begin{itemize}
\item The functors $T_n:\sQ_{n-1}\to \sQ_n$ are cellular (\ref{cell}).
\item $\sQ = \colim \sQ_n$.
\end{itemize}
Write $\F_n$ for $\F_{T_n}$. Then, for any abelian group $A$, there is a spectral sequence
of homological type
\[E^1_{p,q}=H_{p+q-1}(\sQ_p-\sQ_{p-1},\tilde \F_p;A) \Rightarrow H_{p+q}(\sQ,A).\]
\end{thm}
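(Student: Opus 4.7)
The plan is to apply the exact-couple machinery of \S\ref{2.1.1}--\S\ref{2.1.4} to the sequence of chain complexes
\[C_*(\sQ_1;A)\to C_*(\sQ_2;A)\to\dots\to C_*(\sQ_n;A)\to\dots,\]
where $C_*(-;A):=C_*(-)\otimes A$. Cellularity of $T_n$ forces it to be faithful and injective on objects: the symbol $\sQ_n-\sQ_{n-1}$ in Definition \ref{cell} presupposes that $T_n$ identifies $\sQ_{n-1}$ with an honest full subcategory of $\sQ_n$. Consequently each $C_*(T_n;A)$ is a monomorphism, and we are in the filtered-complex setting of \ref{2.1.3} with $F_p C = C_*(\sQ_p;A)$. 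The exact couple of \ref{2.1.4} then yields a spectral sequence
\[E^1_{p,q}=H_{p+q}\bigl(C_*(\sQ_p;A)/C_*(\sQ_{p-1};A)\bigr)\Rightarrow H_{p+q}(\sQ;A).\]

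The heart of the argument, and the step I expect to carry all the content, is the identification of the $E^1$-terms. Here Theorem \ref{t2}, applied to the cellular functor $T_p:\sQ_{p-1}\to\sQ_p$, gives a quasi-isomorphism
\[\mathrm{cone}\bigl(C_*(T_p;A)\bigr)\simeq C_*(\sQ_p-\sQ_{p-1},\tilde\F_p;A)[1].\]
Because $C_*(T_p;A)$ is injective, its mapping cone is quasi-isomorphic to the quotient $C_*(\sQ_p;A)/C_*(\sQ_{p-1};A)$. Taking $(p+q)$-th homology and using the shift convention $H_n(X[1])=H_{n-1}(X)$ produces
\[E^1_{p,q}\simeq H_{p+q-1}(\sQ_p-\sQ_{p-1},\tilde\F_p;A),\]
exactly as claimed.

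For the abutment, the colimit $\sQ=\colim\sQ_n$ is na\"\i ve, so both the nerve functor and tensoring with $A$ commute with it; together with the fact that homology of chain complexes commutes with filtered colimits, this gives $H_*(\sQ;A)=\colim H_*(\sQ_n;A)$, which is the abutment produced by the exact couple of \ref{2.1.4}. The main (and essentially only) obstacle is the identification of the graded quotients via Theorem \ref{t2}; everything else is standard spectral-sequence bookkeeping. Cellularity plays a double r\^ole: it supplies the injectivity needed to apply \ref{2.1.3}, and it is the hypothesis under which Theorem \ref{t2} computes the filtration quotients in terms of the stratum $\sQ_p-\sQ_{p-1}$ and its reduced nerve with coefficients in $\F_p$.
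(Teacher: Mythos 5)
Your proposal is correct and follows the paper's own route: the paper's proof is exactly the spectral sequence of \ref{2.1.4} (the filtered/exact-couple setting, using that cellular functors give injective maps of chain complexes) with the graded pieces identified via Theorem \ref{t2}; you have simply spelled out the bookkeeping that the paper leaves implicit.
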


\begin{proof} This is the spectral sequence of \ref{2.1.4}, taking Corollary \ref{t2} into account. 
\end{proof}

\section{Cellular functors and the Solomon-Tits theorem}\label{SolTits}

\subsection{An abstract version of the Solomon-Tits theorem (for $GL_n$)} 
This section was catalysed by Fei Sun's insight that one can use Theorem \ref{p1} to understand part of Quillen's proof of the Solomon-Tits theorem in \cite{quillen2}. We start with an abstraction of his argument.

\begin{prop}[Sun, essentially]\label{pfei} Let $\V$ be an poset (considered as a category), $\H$ a subset of maximal elements in $\V$ and $\Y=\V-\H$. For any $W\in \V$, write $\V_W=\{X\in \V\mid X< W\}$. Then the naturally commutative diagram
\[\begin{CD}
\displaystyle\coprod_{H\in \H} \V_H @>j>> \Y\\
@V{k}VV @V{i}VV \\
\displaystyle\coprod_{H\in \H} * @>l>> \V
\end{CD}\]
is homotopy cocartesian. Here $i$ is the inclusion, $j$ is componentwise the inclusion, $k$ is the tautological projection and $l$ is the inclusion of the discrete set $\coprod\limits_{H\in \H} *=\H$ into $\V$; a natural transformation $ij\Rightarrow lk$ is defined by the inequality $W\le H$ for $W\in \V_H$.
\end{prop}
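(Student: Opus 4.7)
The plan is to recognize the square of Proposition \ref{pfei} as an instance of the homotopy cocartesian square produced by Proposition \ref{p1}, applied to the inclusion $T:\Y\hookrightarrow \V$. Under this reading, $\sC=\Y$, $\sD=\V$, and $\sD-\sC=\H$, so the Grothendieck construction $\H\int \F_T$ should turn out to be $\coprod_{H\in\H}\V_H$, and all four functors together with the natural transformation match on the nose. The proof is then essentially a bookkeeping verification followed by a single appeal to Proposition \ref{p1}.

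First, I would check that $T:\Y\to \V$ is cellular in the sense of Definition \ref{cell}. Full faithfulness is automatic since $\Y\subseteq \V$ is a full subposet. For the second condition, take $H\in \H=\V-\Y$ and $Y\in\Y$: a morphism $H\to Y$ in $\V$ would mean $H\le Y$, and since $H$ is maximal this forces $H=Y$, contradicting $Y\in\V-\H$. Hence $\V(H,Y)=\emptyset$, and $T$ is cellular.

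Next, I would identify $\H\int \F_T$ explicitly. Objects of $\F_T(H)=T\downarrow H$ are pairs $(Y,\phi)$ with $Y\in \Y$ and $\phi:Y\le H$; since $Y\ne H$ we get $Y<H$. Conversely, any $X\in \V$ with $X<H$ lies in $\Y$, because two distinct maximal elements of $\V$ are necessarily incomparable. Thus $\F_T(H)=\V_H$ as a poset, with the order inherited from $\V$. Morphisms in $\H$ are only identities (maximality again), so the Grothendieck construction splits as
\[
\H\int \F_T\;=\;\coprod_{H\in\H} \V_H.
\]
Under this identification, the first projection $p:\H\int \F_T\to \Y$ of Proposition \ref{p1} becomes the componentwise inclusion $j$, the augmentation $\epsilon$ becomes the tautological projection $k$, the inclusion $\iota:\H\hookrightarrow\V$ becomes $l$, and $T$ becomes $i$. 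The natural transformation $u:T\circ p\Rightarrow \iota\circ\epsilon$ of Proposition \ref{p1}, defined on $(H,Y)$ by the morphism $Y\to H$ witnessing $Y<H$, coincides with the natural transformation $ij\Rightarrow lk$ of Proposition \ref{pfei} given by the inequalities $W\le H$ for $W\in\V_H$.

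With these identifications in place, the conclusion follows at once from Proposition \ref{p1}: the square of categories is homotopy cocartesian. The only potential obstacle is the combinatorial sanity check that maximality of the elements of $\H$ eliminates all unwanted morphisms in both directions (from $\H$ to $\Y$, and between distinct elements of $\H$); once that is settled, everything else is unwinding of definitions.
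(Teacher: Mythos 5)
Your proof is correct and follows essentially the same route as the paper: observe that the inclusion $\Y\hookrightarrow\V$ is cellular (maximality of the elements of $\H$ ruling out morphisms out of $\H$), identify $\H\int\F_T$ with $\coprod_{H\in\H}\V_H$ via $i\downarrow H=\V_H$, and apply Proposition \ref{p1}. Your write-up is simply a more detailed unwinding of the same argument, including the matching of the four functors and the natural transformation, which the paper leaves implicit.
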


\begin{proof} The hypotheses imply that $i$ is a cellular functor, so it suffices to compute $\H\int \F_i=\coprod\limits_{H\in \H} i\downarrow H$. Clearly, $i\downarrow H=\V_H$.
\end{proof}

Sun's second insight was that one can replace the simplicial complex $Y$ in the proof of Quillen's Claim in \cite[\S 2]{quillen2} by the poset of its vertices. We make use of this observation now.

\begin{prop} \label{pcontr} Keep the notation of Proposition \ref{pfei}. Assume that there exists $L\in \V$ such that
\begin{thlist}
\item $\H= \{W\in \V\mid W \text{ maximal and } L\nleq W\}$.
\item For any $W\in \Y$, the supremum $L\vee W$ exists in $\V$.
\end{thlist}
Then $\Y$ is contractible.
\end{prop}

\begin{proof} It is directily inspired by Quillen's proof of his claim (\emph{loc. cit.}), but avoids the use of Theorem A. 

For $W\in \Y$, $L\vee W$ cannot be in $\H$ by (i), hence $\phi(X)=L\vee X$  defines an endofunctor $\phi:\Y\to \Y$, and the inequality $X\le L\vee X$ defines a natural transformation $Id_\Y\Rightarrow \phi$. Now  
\[\phi(\Y)=\{X\in \Y\mid L\le X\}\]
has the minimal element $L$, hence is contractible. Thus $\phi$ factors through a contractible poset; since $\phi$ is homotopic to $Id_\Y$, this proves the claim.
\end{proof}

\begin{cor}\label{c3} Under the assumptions of Proposition \ref{pcontr}, there is a zigzag of homotopy equivalences
\[\bigvee_{H\in \H} \Sigma N(\V_H)\iso N(\V)/N(\Y)\osi N(\V).\]
\end{cor}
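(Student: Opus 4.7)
The plan is to combine Propositions \ref{pfei} and \ref{pcontr} essentially mechanically, using standard cofiber manipulations on the nerve.

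First, I would take nerves of the homotopy cocartesian square of Proposition \ref{pfei}, giving a homotopy pushout of simplicial sets. Since $\Y \hookrightarrow \V$ is the inclusion of a full subcategory, the induced map $N(\Y)\to N(\V)$ is an inclusion of simplicial sets, hence a cofibration; therefore the homotopy cofiber of $N(\Y)\to N(\V)$ is weakly equivalent to the strict quotient $N(\V)/N(\Y)$. By the homotopy cocartesian property, this cofiber is also weakly equivalent to the cofiber of the left vertical map
\[\coprod_{H\in \H} N(\V_H)\longrightarrow \coprod_{H\in \H} *.\]

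Next, I identify this cofiber as $\bigvee_{H\in \H} \Sigma N(\V_H)$. This is the standard observation that for a disjoint family of maps $X_H\to *_H$, the unreduced cone $C\bigl(\coprod_H X_H\bigr)$ collapses all apices to one shared point, so the pushout $\bigl(\coprod_H *_H\bigr)\cup_{\coprod_H X_H} C\bigl(\coprod_H X_H\bigr)$ is precisely the wedge, over $H\in \H$, of the unreduced suspensions $\Sigma N(\V_H)$, wedged at the common apex. This supplies the left-hand equivalence
\[\bigvee_{H\in \H}\Sigma N(\V_H) \iso N(\V)/N(\Y).\]

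Finally, Proposition \ref{pcontr} tells us that $N(\Y)$ is contractible. Since $N(\Y)\hookrightarrow N(\V)$ is a cofibration with contractible source, the quotient map $N(\V)\to N(\V)/N(\Y)$ is a weak equivalence, supplying the right-hand arrow $N(\V)/N(\Y)\osi N(\V)$. Concatenating the two weak equivalences yields the desired zigzag.

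No real obstacle arises: the work is already done by the two preceding propositions. The only point that deserves a sentence of care is the cofiber-of-a-coproduct identification with a wedge, and the cofibration hypothesis, both of which are routine. One might note in passing that the whole argument is self-contained in the spirit of the section, using only Proposition \ref{p1} (via Proposition \ref{pfei}) and an elementary contractibility argument (Proposition \ref{pcontr}), with no appeal to Theorem A.
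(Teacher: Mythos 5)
Your argument is correct and follows essentially the same route as the paper: both deduce the corollary from Proposition \ref{pfei} (comparison of the vertical cofibres in the homotopy pushout), the identification of the cofibre of $\coprod_{H\in\H} N(\V_H)\to\coprod_{H\in\H} *$ with a wedge of (unreduced) suspensions, and the contractibility of $N(\Y)$ supplied by Proposition \ref{pcontr}. The only difference is implementational: the paper rigidifies the naturally commutative square by inserting the categorical cones $\bar\V_H$, so that the equivalence $\bigvee_{H\in\H} N(\bar\V_H)/N(\V_H)\to N(\V)/N(\Y)$ is induced by an honest functor $\bar\jmath$ between strictly commuting squares, whereas you carry out the same cofibre identification by generic mapping-cone manipulations on the nerve level --- which suffices here, since the statement only asserts the existence of a zigzag of equivalences.
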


\begin{proof} For any poset $X$, write $\bar X$ for the poset $X\cup \{+\}$, where $+$ is a new element larger than all elements in $X$: thus $\bar X$ is contractible. Consider the strictly commutative diagram of functors
\[\begin{CD}
\displaystyle\coprod_{H\in \H} \V_H @>j>> \Y\\
@V{i'}VV @V{i}VV \\
\displaystyle\coprod_{H\in \H} \bar \V_H @>\bar\jmath>> \V\\
@A{\epsilon}AA || \\
\displaystyle\coprod_{H\in \H} * @>l>> \V.
\end{CD}\]

Here $i'$ is componentwise the natural inclusion, $\bar\jmath$ sends $\V_H$ to itself and $+_H$ to $H$ while $\epsilon$ sends $*_H$ to $+_H$. The latter functor has a retraction $\bar k$ which extends the functor $k$ of Proposition \ref{pfei}.  From this and the latter proposition, we deduce that the top square is homotopy cocartesian. We deduce a homotopy equivalence
\[\bigvee_{H\in \H} N(\bar \V_H)/N(\V_H)\iso N(\V)/N(\Y)\]
induced by $\bar\jmath$. But $N(\bar \V_H)/N(\V_H)$ is canonically equivalent to $\Sigma N(\V_H)$; the conclusion then follows from Proposition \ref{pcontr}.
\end{proof}

\subsection{From abstract to concrete}\label{soltits} Given a poset $\V$, write $\Simpl(\V)$ for the abstract simplicial complex associated to $\V$: the simplices of $\Simpl(\V)$ are by definition the finite totally ordered subsets of $\V$.

Let $V$ be a finite-dimensional vector space over a [possibly skew] field $k$.  Its \emph{Tits building} $\T(V)$ is [canonically isomorphic to] the flag complex of $V$: elements of $\T(V)$ are flags of proper subspaces of $V$ \cite[\S 2]{quillen2}. It follows that $\T(V)=\Simpl(\V)$, where $\V$ is the poset of proper subspaces of $V$. Thus $\T(V)=\emptyset$ if $\dim V\le 1$; we now assume $\dim V\ge 2$.
Choosing a line $L\subset V$, we get from Corollary \ref{c3} Quillen's decomposition
\[\T(V)\sim \bigvee_{H\in\H} \Sigma \T(H)\]
where $\H$ is the set of hyperplanes in $V$ which do not contain $L$. It follows inductively that $\T(V)$ has the homotopy type of a bouquet of $(n-2)$-spheres.

\section{The rank spectral sequence}

\subsection{$K$-theory of schemes} The first example of application of Theorem \ref{t3} is to
Quillen's $Q$-construction $\sQ(X)$ on the exact category of locally free sheaves of finite
rank over a scheme $X$. Let $\sQ_n=\sQ_n(X)$ be the full subcategory of $\sQ(X)$ consisting of
locally free sheaves of rank $\le n$. Then the assumptions of Theorem \ref{t3} are satisfied
because, in $\sQ(X)$, there are no morphisms from a locally free sheaf of rank $n$ to a locally
free sheaf of rank $<n$. The resulting spectral sequence may be called the \emph{rank spectral
sequence} (for the homology of $\sQ(X)$). 

Note that $\sQ_n-\sQ_{n-1}$ is a groupoid, hence we get
\begin{equation}\label{eq8}
E^1_{p,q}=\bigoplus_{E_\alpha} H_{p+q-1}(\Aut(E_\alpha),\tilde \F_p)
\end{equation}
where $E_\alpha$ runs through the set of isomorphism classes of locally free sheaves of rank
$p$. We took coefficients $\Z$, for simplicity.

\subsection{$K'$-theory of integral schemes} Let $X$ be an integral scheme,
 with function field $K$. If $\eta=\Spec K$ is the generic point of $X$, we have the inclusion $j=\eta\to X$. If $E$ is a sheaf of $\sO_X$-modules, we write $E_K$ for $j^*E$.

\begin{defn} a) A coherent sheaf $E$ on $X$ is \emph{torsion-free} if the map $E\to j_*E_K$ is a monomorphism.\\
b) A subsheaf  $E'$ of a coherent sheaf $E$ is \emph{pure} if $E/E'$ is torsion-free.
\end{defn}

\begin{para} Inside the exact category of coherent sheaves, the full subcategory of torsion-free sheaves is closed under extensions and subobjects. A monomorphism $E'\to E$ between torsion-free sheaves is admissible (within the exact category of torsion-free sheaves) if and only if $E'$ is pure in $E$.
\end{para}

\begin{lemma}\label{l13} Let $\sQ^\coh(X)$ be Quillen's $Q$-construction on the category of coherent sheaves of $\sO_X$-Modules, and let $\sQ^\tf(X)$ be the full subcategory of torsion-free sheaves.
Then the inclusion $\sQ^\tf(X)\to \sQ^\coh(X)$ is a weak equivalence.
\end{lemma}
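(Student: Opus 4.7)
The plan is to invoke Quillen's resolution theorem from \cite[\S 4]{quillen2}: if $\sP$ is a full exact subcategory of an exact category $\sM$, closed under extensions and under kernels of admissible epimorphisms in $\sM$, and if every object of $\sM$ admits a finite $\sP$-resolution, then the induced map $Q\sP\to Q\sM$ is a weak equivalence on nerves.

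The closure hypotheses are immediate from the observations made just before the lemma: the subcategory $\sQ^\tf(X)$ is closed under extensions in the abelian category of coherent $\sO_X$-modules (an extension of torsion-free sheaves is again torsion-free), and a subsheaf of a torsion-free sheaf is torsion-free; in particular, if $0\to F'\to P\to P''\to 0$ is exact with $P,P''$ torsion-free, then $F'$ is torsion-free as a subobject of $P$.

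For the finite-resolution hypothesis, the plan is to show that every coherent sheaf $F$ admits a short exact sequence
\[
0\to K\to P\to F\to 0
\]
with $P$ torsion-free coherent. The subsheaf $K\subset P$ is then automatically torsion-free, giving a torsion-free resolution of length one. A torsion-free coherent $P$ surjecting onto $F$ can be obtained from a choice of local generators of $F$ assembled into a torsion-free sheaf---directly on an affine piece (via a surjection $\sO_X^{\oplus n}\twoheadrightarrow F$ coming from finitely many generators), and more generally by using twists of the structure sheaf or an ample family of line bundles to globalise.

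The step I expect to be the main obstacle is exactly this last one: producing a surjection from a torsion-free coherent sheaf onto an arbitrary coherent $F$, since an integral scheme need not be quasi-projective or admit an ample family. Once this is handled (possibly invoking mild additional hypotheses on $X$, or allowing longer finite resolutions by iterating on the torsion part via the canonical sequence $0\to F_\tors\to F\to \bar F\to 0$), Quillen's resolution theorem yields that the inclusion $\sQ^\tf(X)\to \sQ^\coh(X)$ is a weak equivalence.
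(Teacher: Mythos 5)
Your proposal is correct and is essentially the paper's proof: the paper simply invokes Quillen's resolution theorem (which is \cite[Th.~3]{quillen}, not \cite{quillen2}), takes the closure conditions as clear, and justifies the resolution hypothesis with the single remark that any locally free sheaf is torsion-free. The globalisation issue you flag---producing a surjection onto an arbitrary coherent sheaf from a torsion-free (e.g.\ locally free) one on a general integral scheme---is exactly the point the paper leaves implicit, so your caveat is if anything more careful than the paper's one-line argument.
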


\begin{proof} The conditions of the resolution theorem \cite[Th. 3]{quillen} are verified since any locally free sheaf is torsion-free.
\end{proof}

\begin{prop}\label{p2} Let $E$ be a (coherent) torsion-free sheaf on $X$, with generic fibre
$E_K$. Then the map
\[F\mapsto F_K\]
defines a bijection from the set $Gr(E)$ of pure subsheaves of $E$ to the set $Gr(E_K)$ of
subvector spaces of $E_K$.
\end{prop}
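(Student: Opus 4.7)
The plan is to construct the inverse of $F\mapsto F_K$ explicitly. For a $K$-subspace $V\subseteq E_K$, regard $V$ as a quasi-coherent sheaf on $\eta$ and define
\[\phi(V):= E\cap j_*V,\]
where the intersection is taken inside $j_*E_K$ (which contains $E$ by torsion-freeness). One then has to check three things: (a) $\phi(V)$ is a pure coherent subsheaf of $E$; (b) $\phi(V)_K = V$; (c) $\phi(F_K)=F$ for every pure subsheaf $F\subseteq E$.

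For (c), i.e.\ injectivity of $F\mapsto F_K$: purity of $F$ means $E/F$ is torsion-free, so $E/F$ embeds in $j_*((E/F)_K)=j_*(E_K/F_K)$. Applying $j_*$ to $0\to F_K\to E_K\to E_K/F_K\to 0$ yields the left-exact sequence $0\to j_*F_K\to j_*E_K\to j_*(E_K/F_K)$. Chasing inside $j_*E_K$, the kernel of $E\to j_*(E_K/F_K)$ is on the one hand $F$, and on the other hand $E\cap j_*F_K=\phi(F_K)$.

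For (a) and (b): $\phi(V)$ is the kernel of the morphism of quasi-coherent sheaves $E\to j_*(E_K/V)$, hence quasi-coherent, and therefore coherent as a quasi-coherent subsheaf of the coherent sheaf $E$ (implicitly, $X$ is locally noetherian). Since $j^*$ is exact and $j^*j_*V=V$, applying $j^*$ to the defining sequence gives $\phi(V)_K=E_K\cap V=V$. For purity: by construction $E/\phi(V)\hookrightarrow j_*(E_K/V)$, and any sheaf of the form $j_*W$ is torsion-free because $(j_*W)_K=j^*j_*W=W$ makes the canonical map $j_*W\to j_*(j_*W)_K$ the identity; a subsheaf of a torsion-free sheaf is torsion-free (straightforward from the definition), so $E/\phi(V)$ is torsion-free and $\phi(V)$ is pure.

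The only genuinely delicate point is the coherence of $\phi(V)$: the construction naturally lands in quasi-coherent sheaves, and one needs a finiteness hypothesis (noetherianness of $X$, tacit in the setting of \cite{quillen}) to pass from quasi-coherence to coherence. Once this is granted, every other step is a formal manipulation of $j_*$, exactness of $j^*$, and the definition of torsion-freeness.
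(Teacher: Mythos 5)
Your proof is correct and follows essentially the same route as the paper: the inverse is the same construction (your $E\cap j_*V$ is the paper's fibre product $E\times_{j_*j^*E}j_*V$), the identity $\phi(V)_K=V$ is obtained in the same way from exactness of $j^*$ and $j^*j_*V=V$, and your kernel chase for $\phi(F_K)=F$ uses purity of $F$ exactly as the paper's observation that $(E\cap j^*F)/F$ is a torsion subsheaf of the torsion-free sheaf $E/F$. Your explicit attention to coherence of $\phi(V)$ (via noetherianness, tacit in the paper) is a small extra care, not a divergence.
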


\begin{proof} 
Let $V$ be a sub-vector space of $j^*E = E_K$. Define
\[E\cap V = E\times_{j_*j^*E} j_*V.\]

Then $E\cap V\in Gr(E)$, because the map 
$E/(E\cap V)\to  j_*j^*(E/(E\cap V))$ is a monomorphism (by definition of $E\cap V$). So we
have two maps:
\[j^*:Gr(E)\to Gr(E_K);\quad E\cap -:Gr(E_K)\to Gr(E).\]

We have
\[(E\cap V)_K = j^*(E\times_{j_*j^*E} j_*V)=j^*E\times_{j^*j_*j^*E}j^*j_*V=j^*E\times_{j^*E} V
= V\] so that $j^*\circ (E\cap -) = Id$. On the other hand, if $F$ is a pure subsheaf of $E$,
then
$F\subseteq E\cap j^*F$, and $j^*E=j^*(E\cap j^*F)$, hence (by exactness of $j^*$), $j^*(E\cap
j^*F/F)=0$. Thus $(E\cap j^*F)/F$ is a torsion subsheaf of the torsion-free sheaf $E/F$, hence
is $0$, and our two maps are inverse to each other.
\end{proof}

\begin{para}We write $\sQ_n^\tf(X)$ for the full subcategory of $\sQ^\tf(X)$ of torsion-free
sheaves $E$ such that $\dim_K E_K\le n$. We get another rank spectral sequence
\begin{equation}\label{eq8'}
E^1_{p,q}=\bigoplus_{E_\alpha} H_{p+q-1}(\Aut(E_\alpha),\tilde \F_p) \Rightarrow
H_{p+q}(\sQ^\coh(X))
\end{equation}
cf. Lemma \ref{l13}.
\end{para}

\begin{cor}\label{c2} Let $E\in \sQ_n^\tf(X)$, with generic fibre $E_K\in \sQ_n(K)$. Then the
functor
\[j^*:\sQ_{n-1}^\tf(X)\downarrow E\to \sQ_{n-1}(K)\downarrow E_K\]
is an equivalence of categories.
\end{cor}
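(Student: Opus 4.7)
My plan is to reduce the statement to Proposition \ref{p2} by giving both slice categories explicit descriptions in terms of lattices of admissible subobjects, then checking that $j^*$ is the bijection implemented by that Proposition.

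First I would describe objects combinatorially. An object of $\sQ^\tf(X)\downarrow E$ is a pair $(F,\alpha)$ with $F$ torsion-free and $\alpha\colon F\to E$ a morphism in $\sQ^\tf(X)$, represented by a span $F\twoheadleftarrow F'\hookrightarrow E$ whose right arrow exhibits $F'$ as a pure subsheaf of $E$ and whose left arrow has kernel $K$ pure in $F'$. Up to canonical identification, this amounts to a pair $K\subseteq F'\subseteq E$ of pure subsheaves of $E$: if $F'$ is pure in $E$ and $K\subseteq F'$ is pure in $E$, then $K$ is also pure in $F'$, since $F'/K\hookrightarrow E/K$ is a subsheaf of a torsion-free sheaf (and conversely, transitivity of purity gives the other direction). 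The underlying object is $F=F'/K$, the morphism is the obvious span, and membership in $\sQ_{n-1}^\tf(X)\downarrow E$ becomes $\dim_K(F'_K/K_K)\le n-1$. Analogously, objects of $\sQ_{n-1}(K)\downarrow E_K$ are layers $L\subseteq W\subseteq E_K$ with $\dim_K(W/L)\le n-1$.

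Next I would invoke Proposition \ref{p2}: the map $F'\mapsto F'_K$ is a bijection between pure subsheaves of $E$ and subspaces of $E_K$, with inverse $V\mapsto E\cap V$. Both directions manifestly preserve inclusions (inclusions of subspaces are carried to inclusions of subsheaves by the pullback formula $E\cap V = E\times_{j_*E_K}j_*V$ which is functorial in $V$). Hence $j^*$ induces a bijection between pure layers $K\subseteq F'\subseteq E$ and subspace layers $K_K\subseteq F'_K\subseteq E_K$, and the generic-dimension condition is preserved on the nose. This gives the bijection on objects.

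For morphisms in the $Q$-slice the same lattice description applies: a morphism between two layers in $\sQ^\tf(X)\downarrow E$ is a morphism between the underlying quotients compatible with the maps to $E$, which after composition in $Q$ is encoded by inclusions of pure subsheaves of $E$ satisfying suitable compatibility conditions; the analogous data on the $K$-side is an inclusion of subspaces of $E_K$. The main obstacle, and the only nontrivial point, is checking this morphism-level matching --- composition in $\sQ$ involves pullbacks of pure subsheaves along admissible epis --- but since $j^*$ is exact and Proposition \ref{p2} is an order-preserving bijection, these pullbacks correspond to intersections of the associated subspaces of $E_K$, so the morphism sets are in bijection under $j^*$, proving that $j^*$ is an isomorphism of categories.
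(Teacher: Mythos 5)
Your proposal is correct and follows essentially the same route as the paper: the paper likewise identifies both slice categories with posets of admissible subobject data in $E$ and $E_K$ (morphisms being inclusions) and then quotes Proposition \ref{p2}, whose inclusion-preserving bijection between pure subsheaves of $E$ and subspaces of $E_K$ yields the isomorphism. Your explicit description of objects as layers of pure subsheaves, with the transitivity-of-purity check, is just a more detailed rendering of the paper's one-line identification.
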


\begin{proof} These categories are equivalent to the ordered sets of proper layers of torsion-free subsheaves of $E$ and $j^*E$ (compare \cite[top p. 102]{quillen}). Thus the result directly follows from Proposition
\ref{p2}.
\end{proof}

\begin{ex} Let $X$ be an integral Dedekind scheme (= noetherian, regular of Krull dimension
$\le 1$), with function field $K$. As is well-known, a coherent sheaf $F$ over a Dedekind
scheme is torsion-free if and only if it is locally free. Thus the above generalises the remark of \cite[pp. 191--192]{quillen2}.
\end{ex}

\subsection{The Tits building}

\begin{para}\label{3.2.3} In Corollary \ref{c2}, suppose $n\ge 2$. By \cite[Prop. p.
188]{quillen2}, the classifying space of the poset $\sQ_{n-1}(K)\downarrow E_K=J(E_K)$ is
$GL(E_K)$-weakly equivalent to the suspension of nerve of the Tits building of $E_K$, which in
turn is weakly equivalent to a wedge of $(n-2)$-spheres by the Solomon-Tits theorem (\cite[Th. 2
p. 180]{quillen2}, see \S \ref{soltits}). Hence $(\F_n)_{|E}$ is $\Aut(E)$-weakly equivalent to a wedge of
$(n-1)$-spheres.
\end{para}

\begin{para}\label{3.2.4} In Corollary \ref{c2}, suppose $n= 1$. Then $\sQ_{n-1}(K)\downarrow
E_K$ has two elements: $0\to E_K$ and $E_K\to 0$. Hence the conclusion of \ref{3.2.3} is still true.
\end{para}

\begin{thm}\label{t4} If $X$ is an integral scheme, then the $E^1$-terms of the rank spectral
sequence \eqref{eq8'} (with $\Z$-coefficients) are
\[E^1_{p,q} = \bigoplus_{E_\alpha} H_q(\Aut(E_\alpha), st(E_\alpha))\]
 where $E_\alpha$ runs through the isomorphism classes of torsion-free sheaves of rank $p$, and
$st(E_\alpha)= \tilde H_{p-1}((\F_p)_{|E_\alpha})$ is the [reduced] Steinberg module of $j^*E_\alpha$.
\end{thm}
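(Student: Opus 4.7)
The plan is to apply (a reduced variant of) Lemma \ref{l4} to each summand in \eqref{eq8'}, after identifying the reduced homology of $(\F_p)_{|E_\alpha}$ via the Solomon-Tits theorem as discussed in \ref{3.2.3}--\ref{3.2.4}.

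First, the category $\sQ_p^\tf(X)-\sQ_{p-1}^\tf(X)$ is the groupoid of rank-$p$ torsion-free sheaves on $X$, so it decomposes as a disjoint union over isomorphism classes, each component being equivalent to the one-object groupoid $\Aut(E_\alpha)$. This accounts for the direct-sum structure and reduces the problem to showing, for each $\alpha$,
\[H_{p+q-1}(\Aut(E_\alpha), \tilde \F_p) \simeq H_q(\Aut(E_\alpha), st(E_\alpha)).\]

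Second, I would apply a reduced version of Lemma \ref{l4} on $\sD = \Aut(E_\alpha)$ with the functor $(\F_p)_{|E_\alpha}: \sD \to \sSet$. For $p\ge 1$ the augmentation $\F_p \to *$ is pointwise split at $E_\alpha$ by the zero subsheaf $[0\subset E_\alpha]$, as noted in \ref{3.2.4}; hence $C_*(\sD, (\F_p)_{|E_\alpha})$ splits naturally as $C_*(\sD, \widetilde{(\F_p)_{|E_\alpha}}) \oplus C_*(\sD)$, and the spectral sequence of Lemma \ref{l4} splits accordingly. The reduced summand gives
\[E^2_{p',q'} = H_{p'}(\Aut(E_\alpha), \tilde H_{q'}((\F_p)_{|E_\alpha}, \Z)) \Rightarrow H_{p'+q'}(\Aut(E_\alpha), \tilde \F_p; \Z).\]

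Third, by Corollary \ref{c2} together with \ref{3.2.3}--\ref{3.2.4}, the simplicial set $(\F_p)_{|E_\alpha}$ is $\Aut(E_\alpha)$-equivariantly weakly equivalent to a wedge of $(p-1)$-spheres (with the $p=1$ case being trivial). Consequently $\tilde H_{q'}((\F_p)_{|E_\alpha}) = 0$ unless $q' = p-1$, in which case it equals $st(E_\alpha) := \pi_{p-1}((\F_p)_{|E_\alpha})$ by definition. The spectral sequence therefore collapses on the single row $q' = p-1$, and setting $p' = q$ yields the desired isomorphism
\[H_{p+q-1}(\Aut(E_\alpha), \tilde\F_p) \simeq H_{q}(\Aut(E_\alpha), st(E_\alpha)),\]
which, substituted into \eqref{eq8'}, gives the claim.

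The main technical point is ensuring the equivariance of the Solomon-Tits identification in the third step: one must check that the $GL(E_K)$-weak equivalence of \cite[p.~188]{quillen2} transports compatibly along $\Aut(E_\alpha)\to GL((E_\alpha)_K)$ to an $\Aut(E_\alpha)$-weak equivalence, so that $\tilde H_*$ carries the correct module structure. The reduced variant of Lemma \ref{l4} in the second step is a direct consequence of the splitting of the augmentation and the naturality of Eilenberg-Zilber--Cartier, hence presents no genuine difficulty.
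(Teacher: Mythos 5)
Your argument is correct and is essentially the paper's own proof, which is stated as a one-line citation of Corollary \ref{c2}, \ref{3.2.3}, \ref{3.2.4} and Lemma \ref{l4} --- precisely the ingredients you assemble: componentwise reduction to $\Aut(E_\alpha)$, a reduced form of Lemma \ref{l4} (the splitting by the zero subsheaf is legitimate, since that object of $(\F_p)_{|E_\alpha}$ is $\Aut(E_\alpha)$-invariant), and the equivariant Solomon--Tits identification that collapses the spectral sequence onto the row $q'=p-1$. The equivariance issue you flag in your last paragraph is exactly what \ref{3.2.3} supplies via Quillen's proposition on p.~188 of \cite{quillen2}, so no further work is needed there.
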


\begin{proof} This follows from Corollary \ref{c2}, \ref{3.2.3}, \ref{3.2.4} and Lemma
\ref{l4}.
\end{proof}


\begin{rk}\label{r1} By an argument of Vogel, the exact sequences from Corollary \ref{t2} then
coincide with those of Quillen in \cite[Th. 3 p. 181]{quillen2}. In
general, consider a map $f:E\to B$ whose homotopy fibre $F$ has the homotopy type of a bouquet
of $n$-spheres, with $n>0$. So the Leray-Serre spectral sequence yields a long exact sequence
\begin{equation}\label{eq3.1}
\dots\to H_p(E)\to H_p(B)\to H_{p-n-1}(B,H_n(F))\to H_{p-1}(E)\to \dots
\end{equation}
 
If $C$ is the homotopy cofibre of $f$, we have another long exact sequence
\begin{equation}\label{eq3.2}
\dots\to H_p(E)\to H_p(B)\to \tilde H_p(C)\to H_{p-1}(E)\to \dots
\end{equation}
and we want to know that the two sequences coincide.

Here is Vogel's argument. We may assume that $f$ is a Serre fibtration. Let $E'$ be its mapping cylinder and $CF$ the cone over $F$, so
that we have a fibration of pairs
\[\begin{CD}
(CF,F) @>>> (E',E)\\
&& @VVV\\
&& (B,B).
\end{CD}\]

Since $H_q(CF,F)=\begin{cases} H_n(F)&\text{for $q=n+1$}\\ 0
&\text{else}\end{cases}$, the Leray-Serre spectral sequence for the pair
\[H_p(B,H_q(CF,F))\Rightarrow H_{p+q}(E',E)\]
yields isomorphisms 
\[\tilde H_p(C)\simeq H_p(E',E)\simeq H_{p-n-1}(B,H_{n+1}(CF,F))\simeq H_{p-n-1}(B,H_n(F))\]
which commute with the differentials of \eqref{eq3.1} and \eqref{eq3.2} by functoriality.
\end{rk}

\end{document}